\newtheorem{theorem}{Theorem}
\newtheorem{lemma}[theorem]{Lemma}
\newtheorem{corollary}[theorem]{Corollary}
\theoremstyle{remark}
\newcommand{\n}[2][]{#1\lVert #2 #1\rVert}
\newcommand{\abs}[2][]{#1\lvert #2 #1\rvert}
\newcommand{\pp}[2]{\frac{\partial #1}{\partial #2}}
\newcommand{\R}{\mathbb{R}}
\newcommand{\without}{\setminus}
\newcommand{\asa}{\textup{~as~}}
\newcommand{\ina}{\textup{~in~}}
\newcommand{\ona}{\textup{~on~}}
\newcommand{\maps}{\colon}
\newcommand{\nin}{\notin}
\newcommand{\by}{\times}
\newcommand{\sub}{\subset}
\newcommand{\inv}{^{-1}}
\newcommand{\dell}{\partial}
\newcommand{\grad}{\nabla}
\newcommand\surf{S}                
\newcommand\fluid{\Omega}          
\newcommand\sdim{n}                
\newcommand\bx{x}                  
\newcommand\nx{x'}                 
\newcommand\dS{dS}                 
\newcommand\ny{x_n}                
\newcommand\bc{c}                  
\newcommand\bcx{c'}                
\newcommand\ba{p}                  
\newcommand\bay{p_n}               
\newcommand\bax{p'}                
\newcommand\bn{N}                  
\newcommand\ey{{e_n}}              
\newcommand\bA{A}                  
\newcommand\bC{\tilde \bA}         
\newcommand\bnu{\nu'}              
\newcommand{\kel}[1]{\tilde #1}    
\newcommand{\kfluid}{\Omega^\sim}  
\newcommand{\kphi}{\kel\varphi}    
\newcommand{\kbx}{{\kel x}}        
\newcommand{\knx}{{\kel x'}}       
\newcommand{\kny}{{\kel x_n}}      
\newcommand{\kax}{{x^*}}           
\newcommand{\ksurf}{S^\sim}        
\newcommand{\kbn}{\kel\bn}         
\newcommand{\source}{\beta}        
\begin{document}

\title[Solitary waves in deep water]
{Integral and asymptotic properties of solitary waves in deep water}
\author{Miles H.~Wheeler}
\date{April 1, 2016}

\begin{abstract}
  We consider two- and three-dimensional gravity and gravity-capillary
  solitary water waves in infinite depth. Assuming algebraic decay
  rates for the free surface and velocity potential, we show that the
  velocity potential necessarily behaves like a dipole at infinity and
  obtain a related asymptotic formula for the free surface. We then
  prove an identity relating the ``dipole moment'' to the kinetic
  energy. This implies that the leading-order terms in the asymptotics
  are nonvanishing and in particular that the angular momentum is infinite.
  Lastly we prove a related integral identity which rules out waves of
  pure elevation or pure depression.
\end{abstract}
\maketitle

\section{Introduction}\label{sec:intro}

We consider the motion of an infinitely deep region fluid under the
influence of gravity which is bounded above by a free surface,
including both \emph{gravity waves}, where the pressure is constant
along the free surface, and \emph{gravity-capillary waves}, where it
is proportional to the mean curvature. The fluid is assumed to be
inviscid and incompressible, and the flow is assumed to be
irrotational. We further restrict to traveling-wave solutions which
appear steady in a moving reference frame and which are
\emph{solitary} in that the free surface approaches some asymptotic
height at infinity, normalized to zero. 

Solitary waves in finite depth, where the fluid is instead bounded
below by a flat bed, have a long and celebrated history; see for
instance the reviews \cite{miles:survey,dk:review,groves:survey}. This
includes a wide variety of existence results for gravity waves in two
dimensions \cite{lavrentiev, fh, beale, mielke, at:finite} and
gravity-capillary waves in two \cite{ak:tension, kirch:res, ik:normal,
bg:mult, bgt:plethora} and three
\cite{gs:localized,bgsw:localized,bgw:weak} dimensions. The
two-dimensional gravity waves are \emph{waves of elevation} in that
their free surface elevations are everywhere positive \cite{cs:sym},
while some of the gravity-capillary waves are \emph{waves of
depression} with negative free surfaces and still others have
oscillatory free surfaces which change sign. For the remaining case of
three-dimensional gravity waves, Craig \cite{craig:nonexistence} has
ruled out waves of elevation or depression, while recent results for
the time-dependent problem \cite{wang:onflat, wang:flat} rule out
sufficiently small solitary waves.

In infinite depth, gravity solitary waves are conjectured not to
exist, regardless of the dimension. Hur~\cite{hur:nosolitary} has
proved that the only two-dimensional solitary waves whose free
surfaces are $O(1/\abs \bx^{1+\varepsilon})$ as $\abs \bx \to \infty$
are \emph{trivial waves} with flat free surfaces. For
three-dimensional waves, Craig has shown, as in the finite-depth case,
that there are no waves of elevation or depression
\cite{craig:nonexistence}. Also as in the case of finite depth,
sufficiently small three-dimensional gravity waves are ruled out by
global existence results \cite{wu:global3d, gms:global3d} for small
data in the time-dependent problem.

Two-dimensional gravity-capillary waves in infinite depth were first
rigorously constructed by Iooss and Kirrmann \cite{ik:capdeep}
following the pioneering numerical work of Longuet-Higgins
\cite{lh:cgdeep} and Vanden-Broeck and Dias \cite{vbd:deep}. Their
proof used normal form techniques; Buffoni \cite{buffoni:deep} and
Groves and Wahl\'en \cite{gw:deep} have subsequently given variational
constructions. One distinguishing feature of these solitary waves is
their algebraic decay at infinity. In \cite{ik:capdeep} the free
surfaces were shown to be $O(1/\abs\bx)$ as $\bx \to \infty$; Sun
\cite{sun:properties} later improved this to the expected $O(1/\abs
x^2)$. More generally, Sun proved that a $O(1/\abs
\bx^{1+\varepsilon})$ free surface is automatically $O(1/\abs \bx^2)$,
and that in this case several integral identities hold. In particular,
the ``excess mass'' vanishes so that no such wave can be a wave of
elevation or depression. 

While there are no rigorous constructions of three-dimensional gravity
waves in infinite depth, they have been calculated both formally
\cite{ka:lumps} and numerically \cite{pvbc:deep,wm:deep,am:model}.
Interestingly, as the amplitude of these waves approaches zero, their
energy approaches a finite value \cite[Section~3.2.2]{wm:deep}. This
is consistent with recent global existence results for small data in
the time-dependent problem \cite{dipp:global}, which rule out solitary
waves that are small in a certain function space. 

In this paper we simultaneously consider infinite-depth gravity and
gravity-capillary solitary waves in dimension $n=2$ or $3$. We assume
that the free surface is $O(1/\abs\bx^{\sdim+\varepsilon})$ as
$\abs\bx \to \infty$ while the velocity potential is
$o(1/\abs\bx^{n-2})$. Our first conclusion is that the velocity
potential behaves like a dipole near infinity \eqref{eqn:dipole},
which implies related asymptotics \eqref{eqn:asym} for the free
surface. We next give an explicit formula \eqref{eqn:kinetic} for the
kinetic energy in terms of the ``dipole moment'' and the wave speed.
For nontrivial waves, this ensures that the leading-order terms in our
asymptotics are nonvanishing, which in turn implies that the angular
momentum is infinite (Corollary~\ref{cor:angular}). A modification of
the proof of \eqref{eqn:kinetic} shows that the ``excess mass''
vanishes \eqref{eqn:mass}, ruling out waves of elevation or depression.

We now briefly interpret our results in the context of the previous
work mentioned above. 
The two-dimensional gravity waves we consider are automatically
trivial by Hur's nonexistence result \cite{hur:nosolitary}, so that
our results are interesting only in their method of proof. 
For three-dimensional gravity waves, on the other hand, our results are
entirely new. Our nonexistence proof for waves with finite
angular momentum complements Craig's nonexistence result
\cite{craig:nonexistence}, which only applies to waves of elevation
and depression, as well as the time-dependent results
\cite{wu:global3d,gms:global3d}, which only rule out waves which are
sufficiently small. 
%
For two-dimensional gravity-capillary waves, we improve upon Sun's
asymptotic bounds \cite{sun:properties} by proving asymptotic formulas
for both the free surface and velocity potential, with nonvanishing
leading-order terms. Given this dipole-like behavior of the velocity
potential, somewhat formal proofs of our integral identities were
given in this case by Longuet-Higgens~\cite{lh:cgdeep}. 
To our knowledge, the only rigorous results for three-dimensional
solitary capillary-gravity waves in infinite depth are Hur's recent
generalization \cite{hur:energies} of one of Sun's two-dimensional
identities and the implicit nonexistence result for small waves in
\cite{dipp:global}. 

The two-dimensional results \cite{sun:properties, hur:nosolitary} both
use conformal mappings to obtain a problem in a fixed domain. Hur goes
on to write a non-local Babenko-type equation for the free surface
elevation as a function of the velocity potential, while Sun exploits
the existence of explicit Greens functions. In three dimensions, these
arguments break down entirely. Instead of conformal mappings, we use
the Kelvin transform. This does not fix the domain, but does convert
questions about the asymptotic behavior of the velocity potential near
infinity into questions about the regularity of the transformed
potential near a finite point on the boundary of the transformed fluid
domain. We obtain this boundary regularity using standard Schauder
estimates for weak solutions to elliptic equations. 

This paper is organized as follows. In Section~\ref{sec:results}, we
state our main results. In Section~\ref{sec:dipole}, we prove
Theorem~\ref{thm:dipole} on the asymptotic behavior of the velocity
potential and free surface near infinity. To streamline the
presentation, some of the more technical details are deferred to
Appendix~\ref{sec:appendix}. In Section~\ref{sec:identities}, we prove
Theorem~\ref{thm:kinetic}, which expresses the kinetic energy in terms
of the wave speed and dipole moment, as well as its corollaries. The
proof involves applying the divergence theorem to a carefully chosen
vector field and then using the asymptotics from
Theorem~\ref{thm:dipole} to deal with one of the boundary terms.

\section{Results}\label{sec:results}

There are two distinguished reference frames for a solitary wave: a
moving frame where the motion appears steady, and another ``lab''
frame where the fluid velocity is assumed to vanish at infinity. As is
common practice, we measure positions in the first frame and
velocities in the second. We set $\bx = (\nx,\ny) \in \R^{\sdim-1} \by
\R$, $\sdim=2$ or $3$, with $\nx$ the horizontal coordinate and $\ny$
the vertical coordinate. Assuming that the free surface $\surf$ is a
graph $\ny=\eta(\nx)$, the semi-infinite fluid domain is 
\begin{align*}
  \fluid = \{(\nx,\ny) \in \R^\sdim : \ny < \eta(\nx)\}.
\end{align*}
Since the fluid is irrotational, the fluid velocity in the lab frame
is the gradient of a velocity potential $\varphi$. The equations satisfied
by $\varphi$ and $\eta$ are 
\begin{subequations}\label{eqn:ww}
  \begin{alignat}{2}
    \label{eqn:ww:laplace}
    \Delta \varphi &= 0 &\quad& \ina \fluid, \\
    \label{eqn:ww:kinematic}
    \grad \varphi \cdot \bn &= \bc \cdot \bn && \ona \surf,\\
    \label{eqn:ww:dynamic}
    \tfrac 12\abs{\grad \varphi}^2 - \bc \cdot \grad \varphi  + g
    \eta &= -\sigma \grad \cdot \bn && \ona \surf, \\
    \label{eqn:ww:etadecay}
    \eta &\to 0 &&\asa \abs \nx \to \infty, \\
    \label{eqn:ww:phidecay}
    \varphi,\grad\varphi &\to 0 && \asa \abs \bx \to \infty,
  \end{alignat}
\end{subequations}
where here $g > 0$ is the constant acceleration due to gravity,
$\sigma \ge 0$ is the constant coefficient of surface tension, $\bc =
(\bcx,0) \in \R^\sdim$ is the (nonzero) wavespeed, and $\bn=\bn(\nx)$
is the unit normal vector to $\surf$ pointing out of $\fluid$.

\begin{subequations}\label{eqn:epsdecay}
  Our main assumptions are that $\varphi$ satisfies 
  \begin{align}
    \label{eqn:epsdecay:phi}
    \varphi 
    = o\bigg( \frac 1{\abs \bx^{\sdim-2}} \bigg) 
    \quad \asa \abs \bx \to \infty,
  \end{align}
  while $\eta$ and its derivatives satisfy
  \begin{align}
    \label{eqn:epsdecay:eta}
    \eta 
    = O\bigg( \frac 1{\abs \nx^{\sdim-1+\varepsilon}}\bigg),
    \quad
    \frac{\partial \eta}{\partial \nx_i} 
    = O\bigg( \frac 1{\abs \nx^{\sdim+\varepsilon}}\bigg),
    \quad 
    \frac{\partial^2 \eta}{\partial \nx_i \partial \nx_j} 
    = O\bigg( \frac 1{\abs \nx^{\sdim+1+\varepsilon}}\bigg),
  \end{align}
  as $\abs \nx \to \infty$ for some $\varepsilon \in (0,1)$ and all
  $i,j$. Note that \eqref{eqn:epsdecay:phi} follows from
  \eqref{eqn:ww:phidecay} when $\sdim = 2$.
\end{subequations}

Our first result is that $\varphi$ behaves like a dipole at infinity.
\begin{theorem}\label{thm:dipole}
  Let $\eta \in C^2(\R^{\sdim-1})$ and $\varphi \in
  C^2(\overline{\fluid})$ solve \eqref{eqn:ww} with $\sigma \ge 0$ and
  suppose that the decay estimates \eqref{eqn:epsdecay} hold. Then
  there exists a ``dipole moment'' $\ba = (\bax,0) \in \R^\sdim$ such
  that $\varphi$ satisfies 
  \begin{align}
    \label{eqn:dipole}
    \varphi = \frac{\ba \cdot \bx}{\abs \bx^\sdim} 
    + O\bigg( \frac 1{\abs \bx ^{\sdim-1+\varepsilon}}\bigg),
    \enskip
    \grad \varphi = \grad \frac{\ba \cdot \bx}{\abs \bx^\sdim} 
    + O\bigg( \frac 1{\abs \bx^{\sdim+\varepsilon}} \bigg)
    \enskip \asa \abs\bx \to \infty
  \end{align}
  while $\eta$ satisfies
  \begin{align}
    \label{eqn:asym}
    \eta = \frac 1{g\abs\nx^\sdim} \left( \bc \cdot \ba
    - \sdim \frac{(\bc \cdot \nx)(\ba \cdot \nx)}{\abs \nx^2} \right)
    + O\bigg( \frac 1{\abs\nx^{\sdim+\varepsilon}} \bigg)
    \quad \asa \abs\nx \to \infty.
  \end{align}
\end{theorem}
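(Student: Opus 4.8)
The plan is to use the Kelvin transform to trade the behavior of $\varphi$ at infinity for boundary regularity of the transformed potential at a single point, and then to read off \eqref{eqn:dipole} and \eqref{eqn:asym} from a first-order Taylor expansion there. Fix the transform centered at a point $\kax$ on the asymptotic plane $\{\ny = 0\}$, setting $\kphi(\kbx) = \abs{\kbx - \kax}^{2-\sdim}\,\varphi(\Phi(\kbx))$, where $\Phi(\kbx) = \kax + (\kbx - \kax)/\abs{\kbx - \kax}^2$ is inversion in the unit sphere about $\kax$. This map is conformal, fixes $\{\ny = 0\}$, sends the point at infinity to $\kax$, and carries $\fluid$ onto a domain $\kfluid$ with $\kax \in \dell\kfluid$, the far field of $\surf$ accumulating at $\kax$ along a surface $\ksurf$; since the Kelvin transform preserves harmonicity, $\kphi$ is harmonic in $\kfluid$. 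Two translations of the hypotheses are central. The decay \eqref{eqn:epsdecay:eta} of $\eta$ and its first two derivatives should show that $\ksurf$ extends to a $C^{1,\varepsilon}$ surface through $\kax$ with tangent plane $\{\ny = 0\}$, while the decay \eqref{eqn:epsdecay:phi} of $\varphi$ forces $\kphi$ to extend continuously with $\kphi(\kax) = 0$ --- precisely the statement that there is no monopole term of size $\abs\bx^{2-\sdim}$.

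Next I would transform the kinematic condition \eqref{eqn:ww:kinematic}. With $\bn = (-\grad\eta, 1)/\sqrt{1 + \abs{\grad\eta}^2}$ one has $\bc \cdot \bn = O(\abs{\grad\eta}) = O(1/\abs\nx^{\sdim + \varepsilon})$ on $\surf$, so the Neumann data decays rapidly; by conformal invariance this becomes a Neumann-type condition for $\kphi$ on $\ksurf$ whose data is H\"older continuous of exponent $\varepsilon$ and vanishes at $\kax$. Thus $\kphi$ is harmonic in $\kfluid$ and satisfies a $C^{0,\varepsilon}$ condition on a $C^{1,\varepsilon}$ boundary, and standard boundary Schauder estimates for weak solutions give $\kphi \in C^{1,\varepsilon}$ up to $\dell\kfluid$ near $\kax$. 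Together with $\kphi(\kax) = 0$ this yields the expansion $\kphi(\kbx) = \ba \cdot (\kbx - \kax) + O(\abs{\kbx - \kax}^{1+\varepsilon})$ with $\ba := \grad\kphi(\kax)$; since the outward normal to $\kfluid$ at $\kax$ is $\ey$, the transformed data vanishes there, and $\kphi(\kax) = 0$ cancels any zeroth-order boundary term, we get $\pp{\kphi}{\ny}(\kax) = 0$, so $\ba = (\bax, 0)$ is horizontal.

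To recover \eqref{eqn:dipole} I would transform back. The Kelvin transform of the linear function $\ba \cdot (\kbx - \kax)$ is exactly the dipole $\ba \cdot (\bx - \kax)/\abs{\bx - \kax}^\sdim$, which differs from $\ba \cdot \bx/\abs\bx^\sdim$ by $O(1/\abs\bx^\sdim)$, while the remainder $O(\abs{\kbx - \kax}^{1+\varepsilon})$ transforms into the error $O(1/\abs\bx^{\sdim-1+\varepsilon})$; differentiating $\varphi(\bx) = \abs{\bx - \kax}^{2-\sdim}\kphi(\Phi(\bx))$ and accounting for the $O(1/\abs\bx^2)$ Jacobian of the inversion turns the $C^{0,\varepsilon}$ control of $\grad\kphi$ into the gradient estimate in \eqref{eqn:dipole}. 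For \eqref{eqn:asym} I would solve the dynamic condition \eqref{eqn:ww:dynamic} for the free surface, $g\eta = -\tfrac12\abs{\grad\varphi}^2 + \bc \cdot \grad\varphi - \sigma\,\grad \cdot \bn$, and substitute the gradient asymptotics on $\surf$, where $\abs\bx = \abs\nx\,(1 + O(\eta^2/\abs\nx^2))$. The quadratic term $\abs{\grad\varphi}^2 = O(1/\abs\nx^{2\sdim})$ and the curvature term $\sigma\,\grad \cdot \bn = O(1/\abs\nx^{\sdim+1+\varepsilon})$ are both of lower order, so the leading contribution is $\tfrac1g\,\bc \cdot \grad\frac{\ba \cdot \bx}{\abs\bx^\sdim}$; since $\bc$ and $\ba$ are horizontal this evaluates, after replacing $\abs\bx$ by $\abs\nx$, to exactly the bracketed expression in \eqref{eqn:asym}.

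I expect the main obstacle to be the two translation steps in the first paragraph: extracting $C^{1,\varepsilon}$ regularity of $\ksurf$ and continuity of $\kphi$ \emph{at the single point} $\kax$ from the algebraic decay rates of $\eta$ and $\varphi$. The Kelvin transform couples these decay exponents with the conformal weight $\abs{\kbx - \kax}^{2-\sdim}$, so the estimates must be calibrated precisely for the boundary Schauder machinery to apply at $\kax$; I expect these verifications, rather than the transform computations themselves, to be the technical heart of the argument and the material deferred to the appendix.
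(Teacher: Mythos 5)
Your proposal follows essentially the same route as the paper: a Kelvin inversion sending infinity to a boundary point, continuity of the transformed potential there from \eqref{eqn:epsdecay:phi}, $C^{1+\varepsilon}$ regularity up to the boundary via Schauder estimates for a Robin-type condition with H\"older data vanishing at that point (which also forces $\bay=0$), and then the dynamic condition to extract \eqref{eqn:asym}. The one subtlety you underplay is that the transformed boundary condition is known classically only away from the image of infinity, so before the boundary regularity theory can be applied \emph{at} that point one must show that $\kphi$ lies in $H^1$ near it and that the weak formulation extends across it; the paper devotes a cutoff-function argument in the appendix to exactly this step.
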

Dipole asymptotics along the lines of \eqref{eqn:dipole} feature in
Longuet-Higgins's numerical calculations of two-dimensional
gravity-capillary waves \cite[Section~6]{lh:cgdeep} as well as
Benjamin and Olver's discussion of conserved quantities in the
two-dimensional time-dependent problem
in~\cite[Section~6.5]{bo:symmetry}. Comparing to small-amplitude
expansions, the asymptotic formula \eqref{eqn:asym} is consistent with
(4.1) in \cite{adg:mean} in two dimensions and (5.6) in
\cite{ka:lumps} in three dimensions when $\ba$ and $\bc$ are parallel.

For two-dimensional gravity-capillary waves, Sun proves, roughly, that
the decay $\eta = O(\abs\nx^{-1-\varepsilon})$ forces $\eta =
O(\abs\nx^{-2})$ \cite{sun:properties}. Our result is stronger in that
it identifies the leading order term in the asymptotics, but weaker in
that it requires slightly more information about derivatives of $\eta$
(Sun works in weighted $C^{1+\alpha}$ spaces). Sun also allows for a
semi-infinite upper layer with a different density, and, in the
important special case when $\sigma > \abs\bc^2/4g$, only needs to
assume $\eta = O(\abs\nx^{-\varepsilon})$. As mentioned at the end of
Section~\ref{sec:intro}, his proof uses conformal mappings and hence
does not generalize to three dimensions.

Our next result is an integral identity involving the dipole moment
$\ba$ from Theorem~\ref{thm:dipole}.
\begin{theorem}\label{thm:kinetic}
  In the setting of Theorem~\ref{thm:dipole}, the kinetic energy,
  dipole moment $\ba$, and wave speed $\bc$ are related by
  \begin{align}
    \label{eqn:kinetic}
    \frac 12
    \int_\fluid \abs{\grad \varphi}^2\, d\bx 
    &= 
    - \frac{\pi^{\sdim/2}}{2\Gamma(\frac \sdim 2)}  (\bc \cdot \ba).
  \end{align}
\end{theorem}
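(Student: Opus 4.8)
The plan is to combine two applications of the divergence theorem on the truncated fluid domain $\fluid \cap B_R$ and to let $R \to \infty$, extracting $\bc \cdot \ba$ from the flux through the far part of the sphere by means of the dipole asymptotics \eqref{eqn:dipole}. First note that \eqref{eqn:dipole} gives $\grad\varphi = O(\abs{\bx}^{-\sdim})$, so $\abs{\grad\varphi}^2 = O(\abs{\bx}^{-2\sdim})$ is integrable over the $\sdim$-dimensional region $\fluid$ and the kinetic energy is finite. Since $\varphi$ is harmonic, $\grad\cdot(\varphi\grad\varphi) = \abs{\grad\varphi}^2$, so the divergence theorem on $\fluid\cap B_R$ together with the kinematic condition \eqref{eqn:ww:kinematic} gives
\[
  \int_{\fluid\cap B_R}\abs{\grad\varphi}^2\,d\bx = \int_{\surf\cap B_R}\varphi\,(\bc\cdot\bn)\,\dS + \int_{\fluid\cap\dell B_R}\varphi\,\grad\varphi\cdot\frac{\bx}{\abs{\bx}}\,\dS .
\]
By \eqref{eqn:dipole} the last integrand is $O(\abs{\bx}^{-(2\sdim-1)})$, so that spherical term is $O(R^{-\sdim})\to0$, and I conclude $\int_\fluid \abs{\grad\varphi}^2\,d\bx = \int_\surf \varphi\,(\bc\cdot\bn)\,\dS$.

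Second, I would introduce the divergence-free field $K = (\bc\cdot\bx)\grad\varphi - \varphi\,\bc$, for which $\grad\cdot K = \bc\cdot\grad\varphi + (\bc\cdot\bx)\Delta\varphi - \bc\cdot\grad\varphi = 0$, so that
\[
  0 = \int_{\surf\cap B_R}K\cdot\bn\,\dS + \int_{\fluid\cap\dell B_R}K\cdot\frac{\bx}{\abs{\bx}}\,\dS .
\]
The far-field flux is where \eqref{eqn:dipole} does the work: substituting $\varphi \approx \ba\cdot\bx/\abs{\bx}^\sdim$ and its gradient, a direct computation on $\abs{\bx}=R$ gives $K\cdot\bx/\abs{\bx} \approx -\sdim\,R^{-(\sdim-1)}(\bc\cdot\tfrac{\bx}{\abs{\bx}})(\ba\cdot\tfrac{\bx}{\abs{\bx}})$, whose product with the surface measure $R^{\sdim-1}\,d\omega$ is $R$-independent, while the errors from \eqref{eqn:dipole} contribute $O(R^{-\varepsilon})$. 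As $R\to\infty$ the cap $\fluid\cap\dell B_R$ fills the half-sphere $\{\ny<0\}$, and using $\int_{\{\ny<0\}}(\bx_i/\abs{\bx})(\bx_j/\abs{\bx})\,\dS = \tfrac{\pi^{\sdim/2}}{\sdim\,\Gamma(\sdim/2)}\delta_{ij}$ for horizontal $i,j$, together with $\bc=(\bcx,0)$ and $\ba=(\bax,0)$, yields $\int_{\fluid\cap\dell B_R}K\cdot\bx/\abs{\bx}\,\dS \to -\tfrac{\pi^{\sdim/2}}{\Gamma(\sdim/2)}(\bc\cdot\ba)$. On $\surf$ the kinematic condition turns $K\cdot\bn$ into $(\bc\cdot\bn)\big((\bc\cdot\bx)-\varphi\big)$, so combining the two displays gives $\int_\surf(\bc\cdot\bn)(\bc\cdot\bx)\,\dS - \int_\fluid\abs{\grad\varphi}^2\,d\bx = \tfrac{\pi^{\sdim/2}}{\Gamma(\sdim/2)}(\bc\cdot\ba)$.

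It remains to evaluate $\int_\surf(\bc\cdot\bn)(\bc\cdot\bx)\,\dS$. Writing $\surf$ as the graph $\ny=\eta(\nx)$, the oriented surface element is $\bn\,\dS = (-\grad\eta,1)\,d\nx$ (with $\grad\eta$ the horizontal gradient), so $\bc\cdot\bn\,\dS = -(\bcx\cdot\grad\eta)\,d\nx$, while $\bc\cdot\bx = \bcx\cdot\nx$ on $\surf$; after an integration by parts in $\nx$, whose boundary terms vanish by \eqref{eqn:epsdecay:eta}, this equals $\abs{\bcx}^2\int_{\R^{\sdim-1}}\eta\,d\nx$, a multiple of the excess mass. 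The computation therefore produces
\[
  \int_\fluid\abs{\grad\varphi}^2\,d\bx = \abs{\bcx}^2\int_{\R^{\sdim-1}}\eta\,d\nx - \frac{\pi^{\sdim/2}}{\Gamma(\sdim/2)}(\bc\cdot\ba),
\]
and \eqref{eqn:kinetic} follows once the excess mass is shown to vanish. I expect the two genuine difficulties to be, first, making the far-field limit rigorous---controlling the $O(R^{-\varepsilon})$ errors from \eqref{eqn:dipole} uniformly and verifying that the truncation $\fluid\cap\dell B_R$ really converges to the half-sphere---and, second, disposing of the excess-mass term. The latter is precisely the companion identity \eqref{eqn:mass}; I would establish it by the same method, integrating the dynamic condition \eqref{eqn:ww:dynamic} and applying the divergence theorem to a closely related field, the gravity and surface-tension contributions being perfect horizontal divergences that integrate to zero.
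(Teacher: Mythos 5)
Your computations are all correct, and your argument is in essence the paper's proof with its single vector field taken apart: the paper applies the divergence theorem once to
\begin{align*}
  \bA = \varphi\grad\varphi \;+\; \underbrace{\big((\bc\cdot\bx)\grad\varphi - \varphi\,\bc\big)}_{\text{your } K} \;+\; \underbrace{\frac{\abs{\bc}^2}{g}\Big({-(\ey\cdot\grad\varphi)\grad\varphi + \big(\tfrac12\abs{\grad\varphi}^2 - \bc\cdot\grad\varphi\big)\ey + (\ey\cdot\grad\varphi)\,\bc}\Big)}_{=\,\bC},
\end{align*}
whereas you treat $\varphi\grad\varphi$ and $K$ separately. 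The far-field computation you do for $K$ is exactly the one in the paper (only $K$ survives at infinity, since the other two pieces are $O(\abs\bx^{-\sdim})$ there), and your evaluation of the half-sphere integral and the error terms is right. The structural difference is what happens on $\surf$: in the paper the third piece $\bC$ contributes $-\abs\bc^2\eta\,(\ey\cdot\bn)$ via the dynamic condition, and this combines with your term $(\bc\cdot\bx)(\bc\cdot\grad\eta)$ into the perfect divergence $\grad\cdot\big(\eta(\bc\cdot\bx)\bc\big)$, so the excess mass never appears as an input. Your version instead leaves the residual $\abs{\bc}^2\int\eta\,d\nx$ and must prove $\int\eta\,d\nx=0$ \emph{before} concluding \eqref{eqn:kinetic} (this is not circular --- the paper's proof of \eqref{eqn:mass} uses only Theorem~\ref{thm:dipole} --- but it inverts the paper's logical order, where \eqref{eqn:mass} is a corollary of the method rather than a prerequisite).

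That deferred step is where your proposal has a real gap. Integrating the dynamic condition \eqref{eqn:ww:dynamic} over $\surf$ gives
$g\int\eta\,d\nx = -\int_\surf\big(\tfrac12\abs{\grad\varphi}^2 - \bc\cdot\grad\varphi\big)\,d\nx - \sigma\int_\surf\grad\cdot\bn\,d\nx$.
The surface-tension term is indeed a perfect horizontal divergence whose boundary contributions vanish by \eqref{eqn:epsdecay:eta}, but the gravity term $g\eta$ is \emph{not} a perfect divergence --- it is precisely the quantity you are trying to show integrates to zero --- so your closing sentence is wrong as stated. The genuinely nontrivial ingredient you have not supplied is the divergence-free Rellich-type field $\bC$ above, whose normal component on $\surf$ equals $\frac{\abs\bc^2}{g}\big(\tfrac12\abs{\grad\varphi}^2 - \bc\cdot\grad\varphi\big)(\ey\cdot\bn)$ by the kinematic condition and whose flux through $\fluid\cap\dell B_r$ is $O(r^{-1})$ since $\bC = O(\abs{\grad\varphi}) = O(\abs\bx^{-\sdim})$; this is what kills the Bernoulli terms and yields \eqref{eqn:mass}. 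Without exhibiting this field (or an equivalent device), the reduction you have carried out does not close. Everything else --- the finiteness of the kinetic energy, the identity $\int_\fluid\abs{\grad\varphi}^2\,d\bx = \int_\surf\varphi\,(\bc\cdot\bn)\,\dS$, the limit of the flux of $K$, and the integration by parts giving $\abs\bc^2\int\eta\,d\nx$ --- is sound.
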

For two-dimensional gravity-capillary waves, Longuet-Higgins
\cite{lh:cgdeep} gave a formal proof of \eqref{eqn:kinetic} assuming
\eqref{eqn:dipole}; also see equation (6.22) in \cite{bo:symmetry}.
For three-dimensional waves, however, \eqref{eqn:kinetic} seems to be
new. 

Theorem~\ref{thm:kinetic} implies $\bc \cdot \ba < 0$ for nontrivial
waves with $\varphi \not \equiv 0$, and hence that the leading-order
terms in \eqref{eqn:dipole} and \eqref{eqn:asym} do not vanish. For
$\sdim = 2$, solving \eqref{eqn:kinetic} for $\ba$ and substituting
into \eqref{eqn:asym} yields
\begin{align*}
  \eta = 
  \left(\frac \pi g
  \int_\fluid \abs{\grad\varphi}^2\, d\bx\right)
  \frac 1{\abs\nx^2} + O\bigg( \frac 1{\abs\nx^{2+\varepsilon}}
  \bigg),
\end{align*}
which for instance implies that $\eta > 0$ for $\abs\nx$ sufficiently
large. For $\sdim = 3$, $\eta$ instead takes both positive and
negative values in any neighborhood of infinity.

Another consequence of Theorem~\ref{thm:kinetic} is the following
dichotomy.
\begin{corollary}\label{cor:angular}
  In the setting of Theorem~\ref{thm:dipole}, either
  the angular momentum $\int_\fluid \bx \by \grad \varphi\, d\bx$ is
  infinite or the wave is trivial, i.e.~$\varphi \equiv 0$ and $\eta
  \equiv 0$. 
\end{corollary}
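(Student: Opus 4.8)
The plan is to prove the contrapositive: if the wave is nontrivial, then the angular momentum $\int_\fluid \bx \by \grad\varphi\, d\bx$ diverges. I first note that the two halves of ``trivial'' are tied together by the kinematic condition. If $\varphi \equiv 0$, then \eqref{eqn:ww:kinematic} reduces to $\bc \cdot \bn = 0$ on $\surf$; writing the upward unit normal of the graph $\ny = \eta(\nx)$ as $\bn = (-\grad\eta,1)/\sqrt{1+\abs{\grad\eta}^2}$ and recalling that $\bc = (\bcx,0)$, this becomes $\bcx \cdot \grad\eta \equiv 0$, so $\eta$ is constant along every line parallel to $\bcx$. Since $\eta \to 0$ at infinity by \eqref{eqn:epsdecay:eta}, each such constant vanishes and $\eta \equiv 0$. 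Hence $\varphi \equiv 0$ already forces the wave to be trivial, and it remains only to treat $\varphi \not\equiv 0$.

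So suppose $\varphi \not\equiv 0$. Then $\grad\varphi \not\equiv 0$ (a nonzero constant potential is excluded by the decay \eqref{eqn:epsdecay:phi}), so the left-hand side of \eqref{eqn:kinetic} is strictly positive and Theorem~\ref{thm:kinetic} gives $\bc \cdot \ba < 0$; in particular $\ba \neq 0$. I now substitute the gradient asymptotics of \eqref{eqn:dipole}. Writing $\psi = (\ba \cdot \bx)/\abs\bx^\sdim$ and using $\grad\psi = \ba/\abs\bx^\sdim - \sdim(\ba\cdot\bx)\bx/\abs\bx^{\sdim+2}$, the radial part is annihilated by the cross product because $\bx \by \bx = 0$, leaving
\[
  \bx \by \grad\psi = \frac{\bx \by \ba}{\abs\bx^\sdim}.
\]
Thus by \eqref{eqn:dipole} the integrand satisfies $\bx \by \grad\varphi = (\bx \by \ba)/\abs\bx^\sdim + O(\abs\bx^{1-\sdim-\varepsilon})$, whose main term is homogeneous of degree $1-\sdim$.

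Set $L_R = \int_{\fluid \cap \{\abs\bx < R\}} \bx \by \grad\varphi\, d\bx$; the goal is $\abs{L_R} \to \infty$. In polar coordinates $\bx = r\omega$ the leading term contributes $\int_{R_0}^R \big( \int_{\Omega_r} \omega \by \ba\, d\omega \big)\, dr$, where $\Omega_r \subset S^{\sdim-1}$ is the angular cross-section of $\fluid$ at radius $r$. Since $\eta \to 0$, the set $\Omega_r$ converges to the lower hemisphere $\{\omega_\sdim < 0\}$, and there the horizontal components of $\int \omega\, d\omega$ cancel by symmetry while the vertical one does not, so the inner integral tends to $-V_\sdim(\ey \by \ba)$ for some $V_\sdim > 0$. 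As $\ba$ is horizontal and nonzero, $\ey \by \ba \neq 0$, so this limit is nonzero and the radial integral grows linearly in $R$. The remainder integrates to $O(\int_{R_0}^R r^{-\varepsilon}\, dr) = O(R^{1-\varepsilon})$, which is lower order since $\varepsilon > 0$. Hence $L_R = -V_\sdim(\ey \by \ba)\,R + o(R)$ and $\abs{L_R} \to \infty$.

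The main obstacle is this penultimate step: confirming that the linear divergence is genuine rather than a cancellation artifact. Two structural facts secure this. First, $\fluid$ asymptotically fills only the lower half-space, breaking the spherical symmetry that would otherwise make $\int_{S^{\sdim-1}} \omega\, d\omega = 0$; it is precisely this broken symmetry, together with $\ba$ being a nonzero horizontal vector, that keeps the angular integral from vanishing. Second, the error exponent in \eqref{eqn:dipole} is subcritical, so the remainder is $o(R)$ and cannot absorb the linear growth of the main term. The remaining bookkeeping is routine: the radial part of $\grad\psi$ drops out of the cross product, the leading integrand is homogeneous of degree $1-\sdim$, and the thin slab between $\fluid$ and the exact half-space alters $\Omega_r$ only by a set whose measure tends to zero, so the inner angular integrals still converge to the nonzero constant $-V_\sdim(\ey\by\ba)$.
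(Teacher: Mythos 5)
Your proof is correct, and it reaches the conclusion by a somewhat different route than the paper. Both arguments rest on the same two pillars: Theorem~\ref{thm:kinetic} forces $\ba \ne 0$ whenever $\varphi \not\equiv 0$, and the integral of $\bx \by \grad\frac{\ba\cdot\bx}{\abs\bx^\sdim} = \frac{\bx\by\ba}{\abs\bx^\sdim}$ over the lower hemisphere is a nonzero multiple of $\ey \by \ba$, which cannot vanish for a nonzero horizontal $\ba$. The difference is in how this is converted into a statement about the angular momentum. The paper argues by contradiction through the flux across the spherical caps $\fluid \cap \dell B_r$: finiteness of the volume integral forces the limiting cap integral \eqref{eqn:mustbe} to vanish, whence $\ba = 0$. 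You instead integrate the asymptotic expansion radially over $\fluid \cap B_r$ and exhibit genuine linear growth, $L_R = -V_\sdim(\ey\by\ba)R + o(R)$; this is slightly more informative (it quantifies the rate of divergence of the truncated integrals) and avoids the mildly implicit step in the paper that finiteness forces the shell integrals to tend to zero along a subsequence. Your bookkeeping is sound: the radial part of the dipole gradient is killed by the cross product, the error term contributes $O(R^{1-\varepsilon})$, and the angular cross-sections $\Omega_r$ differ from the lower hemisphere only within the band $\abs{\omega_\sdim} \le \n{\eta}_{L^\infty}/r$, of measure $O(1/r)$. Finally, for the step $\varphi \equiv 0 \Rightarrow \eta \equiv 0$ you use the kinematic condition \eqref{eqn:ww:kinematic}, which degenerates to $\bcx\cdot\grad\eta \equiv 0$ and then decay along lines parallel to $\bcx$; the paper instead uses the dynamic condition \eqref{eqn:ww:dynamic} and a maximum-principle argument at extrema of $\eta$. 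Both are valid; yours is more elementary and treats $\sigma = 0$ and $\sigma > 0$ uniformly.
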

For two-dimensional time-dependent waves, Benjamin and Olver observe
that ${\bax \ne 0}$ causes the integral defining the total angular
momentum to diverge~\cite[Section~6.5]{bo:symmetry}. This motivates
them to impose additional restrictions guaranteeing $\bax = 0$, but
not necessarily $\bay = 0$. For two-dimensional gravity-capillary
waves, Longuet-Higgins \cite{lh:cgdeep} explains how dipole behavior
for the velocity potential causes the horizontal momentum to be
indeterminant in that, for instance, $\grad\varphi\nin L^1(\fluid)$.

A final corollary of the proof of Theorem~\ref{thm:kinetic} is that
the ``excess mass'' vanishes.
\begin{corollary}\label{cor:mass}
  In the setting of Theorem~\ref{thm:dipole}, the wave has zero
  excess mass in that
  \begin{align}
    \label{eqn:mass}
    \int_{\R^{\sdim-1}} \eta(\nx)\, d\nx = 0.
  \end{align}
\end{corollary}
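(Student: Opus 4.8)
The plan is to integrate the dynamic boundary condition \eqref{eqn:ww:dynamic} over the horizontal plane $\R^{\sdim-1}$, parametrizing $\surf$ by $\nx$, and to show that the right-hand side contributes nothing. Writing $\bn = (-\grad'\eta, 1)/J$ with $J = (1+\abs{\grad'\eta}^2)^{1/2}$ and $\dS = J\, d\nx$, the capillary term $-\sigma\grad\cdot\bn$ is a pure horizontal divergence, namely $\pm\sigma\,\grad'\cdot(\grad'\eta/J)$. Its integral over $\R^{\sdim-1}$ equals the flux of $\grad'\eta/J$ through the spheres $\abs\nx = R$, which by the decay $\grad'\eta = O(\abs\nx^{-\sdim-\varepsilon})$ in \eqref{eqn:epsdecay:eta} is $O(R^{-2-\varepsilon}) \to 0$. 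The gravity term yields $g\int_{\R^{\sdim-1}}\eta\, d\nx$, finite since $\eta = O(\abs\nx^{-\sdim+1-\varepsilon})$ is integrable. Thus \eqref{eqn:mass} reduces to proving
\[
  \int_{\R^{\sdim-1}} \Bigl( -\tfrac12\abs{\grad\varphi}^2 + \bc\cdot\grad\varphi \Bigr)\Big|_{\ny=\eta(\nx)}\, d\nx = 0,
\]
where each integrand decays at least like $\abs\nx^{-\sdim}$ on $\surf$ by \eqref{eqn:dipole} and is therefore integrable.

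To prove this last identity I would reuse the divergence-theorem strategy of Theorem~\ref{thm:kinetic}, now applied to the vector field
\[
  V = (\grad\varphi\cdot\ey)\grad\varphi - \tfrac12\abs{\grad\varphi}^2\ey - (\grad\varphi\cdot\ey)\bc + (\bc\cdot\grad\varphi)\ey
\]
on $\fluid$. Since $\varphi$ is harmonic, a direct computation gives $\grad\cdot V = 0$: the first two terms contribute $\tfrac12\dell_\sdim\abs{\grad\varphi}^2 - \tfrac12\dell_\sdim\abs{\grad\varphi}^2 = 0$, and the two terms linear in $\bc$ cancel likewise. On $\surf$ the kinematic condition \eqref{eqn:ww:kinematic} gives $\grad\varphi\cdot\bn = \bc\cdot\bn$, which makes the two terms proportional to $(\grad\varphi\cdot\ey)(\bc\cdot\bn)$ cancel, leaving $V\cdot\bn\,\dS = (-\tfrac12\abs{\grad\varphi}^2 + \bc\cdot\grad\varphi)(\bn\cdot\ey)\,\dS$; since $(\bn\cdot\ey)\,\dS = d\nx$, this is exactly the integrand above.

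Applying the divergence theorem on $\fluid\cap B_R$ then gives $\int_{\surf\cap B_R} V\cdot\bn\,\dS = -\int_{\fluid\cap\dell B_R} V\cdot(\bx/\abs\bx)\,\dS$, and it remains to show the right-hand side vanishes as $R\to\infty$. This is where the asymptotics \eqref{eqn:dipole}, which give $\grad\varphi = O(\abs\bx^{-\sdim})$, are essential: the quadratic terms in $V$ are $O(\abs\bx^{-2\sdim})$ and contribute $O(R^{-\sdim-1})$, while the terms linear in $\bc$ are only $O(\abs\bx^{-\sdim})$ and contribute $O(R^{-1})$; both tend to zero. Passing to the limit and combining with the capillary and gravity computations establishes \eqref{eqn:mass}.

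The hard part will be the two pieces of bookkeeping just isolated. First, getting the algebra on $\surf$ to collapse to precisely the Bernoulli combination $-\tfrac12\abs{\grad\varphi}^2 + \bc\cdot\grad\varphi$—so that it matches the integrated dynamic condition after the capillary and gravity terms are removed—is the delicate step, since it relies on the exact cancellation forced by \eqref{eqn:ww:kinematic}. Second, verifying that the flux of $V$ at infinity vanishes is exactly the point at which the argument departs from the proof of Theorem~\ref{thm:kinetic}: there the analogous boundary-at-infinity term is $O(1)$ and produces the dipole contribution $\bc\cdot\ba$, whereas here the term linear in $\bc$ decays one power faster, so the flux vanishes outright. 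The remaining decay estimates needed for integrability and for the vanishing flux are then routine given \eqref{eqn:dipole} and \eqref{eqn:epsdecay:eta}.
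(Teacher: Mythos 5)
Your proof is correct and is essentially the paper's own argument: your vector field $V$ is exactly $-\tfrac{g}{\abs{\bc}^2}\,\bC$, where $\bC$ is the divergence-free field used in the paper, and the remaining steps (divergence theorem on $B_R\cap\fluid$, the capillary term absorbed as a horizontal divergence with vanishing flux, and the $O(R^{-1})$ decay of the flux through $\dell B_R\cap\fluid$ from \eqref{eqn:dipole}) coincide with the paper's. The only difference is cosmetic: you first integrate the dynamic condition and then prove the flux identity, whereas the paper substitutes both boundary conditions directly into $\bC\cdot\bn$.
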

For two-dimensional capillary-gravity waves, \eqref{eqn:mass} was
derived in \cite{lh:cgdeep}, and a stronger version of
Corollary~\ref{cor:mass} was proved rigorously in
\cite{sun:properties}. For three-dimensional waves, however,
Corollary~\ref{cor:mass} appears to be new. An obvious consequence is
that no nontrivial waves satisfying \eqref{eqn:epsdecay} are waves of
elevation (with $\eta > 0$) or depression (with $\eta < 0$). In this
setting, however, waves of elevation and depression have been ruled
out by Craig \cite{craig:nonexistence} using maximum principle
arguments, without imposing any assumptions on the decay rates of
$\eta$ and $\varphi$.

\section{Proof of Theorem~\ref{thm:dipole}}\label{sec:dipole}
The main ingredient in the proof of Theorem~\ref{thm:dipole} is the
following lemma, which states that \eqref{eqn:dipole} holds
independently of the dynamic boundary condition \eqref{eqn:ww:dynamic}. 
\begin{lemma}\label{lem:dipole}
  Let $\varphi \in C^2(\overline\fluid)$ and $\eta \in
  C^2(\R^{\sdim-1})$ solve
  \eqref{eqn:ww:laplace}--\eqref{eqn:ww:kinematic}. If the decay
  estimates \eqref{eqn:epsdecay} hold, then there exists $\ba =
  (\bax,0) \in
  \R^\sdim$ (possibly zero) so that $\varphi$ satisfies the asymptotic
  conditions \eqref{eqn:dipole}.
  \begin{proof}
    We apply the Kelvin transform, setting
    \begin{align*}
      \kbx = T(x) = \frac \bx{\abs\bx^2},
      \qquad 
      \kphi(\kbx) = \frac 1{\abs\kbx^{\sdim-2}}
      \varphi\bigg( \frac{\kbx}{\abs\kbx^2} \bigg),
      \qquad 
      \kfluid = T(\fluid\without B_1),
    \end{align*}
    where $B_1 = \{ \bx : \abs \bx < 1\}$ is an open ball centered at
    the origin. Note that $T(T(x)) = x$. This change of variables
    converts asymptotic questions about $\varphi$ as $\abs\bx \to
    \infty$ into local questions about $\kphi$ in a neighborhood of $0
    \in \dell\kfluid$. For instance, \eqref{eqn:epsdecay:phi} implies
    that $\kphi$ extends to a $C^0(\overline\kfluid)$ function with
    $\kphi(0) = 0$.

    Using the decay assumptions \eqref{eqn:epsdecay}, we show in the
    appendix that $\kfluid$ has a $C^2$ boundary portion $\ksurf$
    containing $0$ and that $\kphi \in H^1(\kfluid)$ is a weak
    solution to the boundary-value problem
    \begin{align}
      \label{eqn:weak}
      \Delta \kphi = 0 \ina \kfluid,
      \qquad 
      \pp \kphi \kbn + \alpha \kphi = \source \ona \ksurf,
    \end{align}
    where $\alpha,\source \in C^\varepsilon(\ksurf)$ are given up to
    a sign by
    \begin{align*}
      \alpha(\kbx) =  -(\sdim-2) \big(\bx \cdot \bn(\bx)\big),
      \quad
      \source(\kbx) = \abs \bx^\sdim \big(\bc \cdot \bn(\bx)\big)
      \qquad 
      \ona \ksurf \without \{0\}
    \end{align*}
    and $\alpha(0) = \source(0) = 0$.
    Standard elliptic regularity theory (for instance Theorem~5.51 in
    \cite{lieberman:oblique}) then implies that $\kphi \in
    C^{1+\varepsilon}(\kfluid \cup \ksurf)$. In particular, setting
    $\ba = (\bax, \bay) = \grad \kphi(0)$, we have an
    expansion
    \begin{align}
      \label{eqn:kasym}
      \kphi(\kbx) = \ba \cdot \kbx + O(\abs\kbx^{1+\varepsilon}),
      \qquad 
      \grad \kphi(\kbx) = \ba  + O(\abs\kbx^\varepsilon)
    \end{align}
    as $\kbx \to 0$. Rewriting \eqref{eqn:kasym} in terms of $\varphi$
    and $\grad\varphi$ then yields \eqref{eqn:dipole} as desired.
    Finally, plugging $\kbx = 0$ in the boundary condition in
    \eqref{eqn:weak}, we find
    \begin{gather*}
      \bay = \pp\kphi\kbn(0) = -\alpha(0)\kphi(0) + \source(0) = 0.
      \qedhere
    \end{gather*}
  \end{proof}
\end{lemma}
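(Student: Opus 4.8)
The plan is to use the Kelvin transform, which is tailor-made for converting the decay hypotheses \eqref{eqn:epsdecay} at infinity into a local regularity statement at a single boundary point. I would set $\kbx = \bx/\abs{\bx}^2$ together with $\kphi(\kbx) = \abs{\kbx}^{-(\sdim-2)}\varphi(\kbx/\abs{\kbx}^2)$, so that the portion of $\fluid$ outside the unit ball maps to a bounded domain $\kfluid$ with the point at infinity sent to $0 \in \dell\kfluid$. Since the Kelvin transform preserves harmonicity, $\kphi$ is again harmonic, and the assumption $\varphi = o(\abs{\bx}^{-(\sdim-2)})$ translates exactly into $\kphi$ extending continuously to $0$ with $\kphi(0)=0$. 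The key observation is that the desired dipole asymptotics \eqref{eqn:dipole} for $\varphi$ become, after undoing the transform, nothing more than a first-order Taylor expansion of $\kphi$ at the origin; the whole proof thus reduces to showing that $\kphi$ is $C^{1+\varepsilon}$ up to the boundary near $0$.

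To establish that regularity I would proceed in two preliminary steps. First, using the decay of $\eta$ and its first two derivatives in \eqref{eqn:epsdecay:eta}, I would verify that the image $\ksurf$ of the free surface is a $C^2$ hypersurface passing through the origin, so that $\kfluid$ genuinely has a $C^2$ boundary portion there. Second, I would transform the kinematic condition \eqref{eqn:ww:kinematic}. Because the Kelvin weight $\abs{\kbx}^{-(\sdim-2)}$ is differentiated along with $\varphi$, the Neumann-type condition $\grad\varphi\cdot\bn = \bc\cdot\bn$ does not remain a pure Neumann condition but picks up a zeroth-order term, becoming an oblique-derivative (Robin-type) condition $\pp{\kphi}{\kbn} + \alpha\kphi = \source$ on $\ksurf$. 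A direct computation should give $\alpha$ proportional to $\bx\cdot\bn$ and $\source$ proportional to $\abs{\bx}^\sdim\,\bc\cdot\bn$, with the decay estimates ensuring $\alpha,\source \in C^\varepsilon(\ksurf)$ and $\alpha(0)=\source(0)=0$.

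With a $C^2$ boundary portion, Hölder-continuous coefficients, and $\kphi$ realized as a weak $H^1$ solution of this oblique-derivative problem, I would invoke the standard Schauder boundary estimates (for instance \cite{lieberman:oblique}) to conclude $\kphi \in C^{1+\varepsilon}$ up to $0$. Setting $\ba = \grad\kphi(0)$ then yields $\kphi(\kbx) = \ba\cdot\kbx + O(\abs{\kbx}^{1+\varepsilon})$, and rewriting this in terms of $\varphi$ gives \eqref{eqn:dipole}. Evaluating the boundary condition at $\kbx = 0$ and using $\kphi(0)=0$ and $\source(0)=0$ produces $\bay = \pp{\kphi}{\kbn}(0) = 0$, so the dipole moment is horizontal.

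The step I expect to be most delicate is the justification that $\kphi$ is an $H^1$ weak solution of the transformed problem with the stated boundary regularity and Hölder coefficients, \emph{uniformly up to the singular image point} $0$. Away from $0$ the transformed equation and boundary condition hold classically, but one must confirm that no pathology concentrates at the image of infinity and that the rates in \eqref{eqn:epsdecay} are precisely what is needed to control both the $C^2$ geometry of $\ksurf$ and the $C^\varepsilon$ norms of $\alpha$ and $\source$ near $0$. These are essentially bookkeeping estimates, and I would defer them to an appendix while treating the elliptic regularity theorem as a black box.
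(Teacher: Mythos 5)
Your proposal follows essentially the same route as the paper: Kelvin transform sending infinity to $0\in\dell\kfluid$, verification that $\ksurf$ is a $C^2$ boundary portion and that $\kphi\in H^1$ weakly solves the oblique-derivative problem with $C^\varepsilon$ coefficients $\alpha,\source$ vanishing at $0$, Schauder regularity from \cite{lieberman:oblique} to get $\kphi\in C^{1+\varepsilon}$, and evaluation of the boundary condition at $0$ to conclude $\bay=0$. The technical bookkeeping you flag as delicate is exactly what the paper defers to its appendix, so the argument is correct and matches the paper's proof in both structure and detail.
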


Theorem~\ref{thm:dipole} now follows from Lemma~\ref{lem:dipole} and
the dynamic boundary condition \eqref{eqn:ww:dynamic}.
\begin{proof}[Proof of Theorem~\ref{thm:dipole}]
  We have already shown \eqref{eqn:dipole}, so it suffices to prove
  \eqref{eqn:asym}. From \eqref{eqn:epsdecay:eta},
  we have $\grad \cdot \bn = O(1/\abs\nx^{\sdim+1+\varepsilon})$. 
  Solving the dynamic boundary condition \eqref{eqn:ww:dynamic} for
  $\eta$ and plugging in \eqref{eqn:dipole} therefore yields
  \begin{align}
    \label{eqn:replaceme}
    \eta(\nx)
    &= 
    \frac 1{g\abs\bx^\sdim} \left( \bc \cdot \ba
    - \sdim \frac{(\bc \cdot \nx)(\ba \cdot \nx)}{\abs \bx^2} \right)
    + O\bigg( \frac 1{\abs\bx^{\sdim+\varepsilon}} \bigg)
    + O\bigg( \frac 1{\abs\nx^{\sdim+1+\varepsilon}} \bigg),
  \end{align}
  where here $\bx$ is shorthand for $(\nx,\eta(\nx))$. Since $\eta \to
  0$ as $\abs\nx \to \infty$, we can replace each occurrence of $\bx$ in
  \eqref{eqn:replaceme} with $(\nx,0)$, yielding \eqref{eqn:asym} as
  desired.
\end{proof}

\section{Integral identities}\label{sec:identities}

Let $B_r = \{ \bx : \abs\bx < r \}$ denote the open ball
with radius $r$ centered at the origin, and let $\ey = (0,1)$ be the
unit vector in the vertical direction.
\begin{proof}[Proof of Theorem~\ref{thm:kinetic}]
  Consider the vector field
  \begin{align*}
    \bA
    &:= 
    \bigg(-\frac {\abs\bc^2}g (\ey \cdot \grad \varphi )
    + \bc \cdot \bx + \varphi \bigg) 
    \grad \varphi \\
    &\qquad + \frac{\abs\bc^2}g \bigg( \frac 12 \abs{\grad \varphi}^2
    - \bc \cdot \grad \varphi \bigg) 
    \ey 
    +
    \bigg( \frac{\abs\bc^2}g ( \ey \cdot \grad \varphi )
    - \varphi \bigg)
    \bc.
  \end{align*}
  A simple calculation using only the fact that $\varphi$ is harmonic
  shows that $\grad \cdot \bA = \abs{\grad \varphi}^2$. 
  Thus we can apply the divergence theorem to
  $\bA$ on the bounded region $B_r \cap \fluid$ to obtain
  \begin{align}
    \label{eqn:divA}
    \int_{B_r \cap \fluid} \abs{\grad \varphi}^2\, d \bx 
    &=
    \int_{B_r \cap S} \bA \cdot \bn \, \dS
    +\int_{\dell B_r \cap \fluid} \bA \cdot \bn \, \dS.
  \end{align}
  Note that $B_r \cap S$ is the portion of the boundary of $B_r \cap
  \fluid$ on the free surface while $\dell B_r \cap \fluid$ is the
  portion inside the fluid. 

  On the free surface $\surf$, we have
  \begin{align}
    \label{eqn:ndS}
    \bn =  \frac{(-\grad \eta,1)}{\sqrt{1+\abs{\grad\eta}^2}},
    \qquad 
    \dS = \sqrt{1+\abs{\grad\eta}^2}\, d\nx,
  \end{align}
  while the boundary conditions \eqref{eqn:ww:kinematic} and
  \eqref{eqn:ww:dynamic} imply
  \begin{align*}
    \bA \cdot \bn 
    &= 
    (\bc \cdot \bx)
    (\bc \cdot \bn)
    - \left(\frac {\abs\bc^2\sigma}{g} \grad \cdot \bn + \abs\bc^2 \eta \right)
    (\ey \cdot \bn).
  \end{align*}
  Thus the first term on the right hand side of \eqref{eqn:divA} can
  be rewritten as
  \begin{align}
    \notag
    \int_{B_r \cap \surf} \bA \cdot \bn\, \dS
    &= 
    -\int_{B_r \cap \surf} \left((\bc \cdot \bx)
    (\bc \cdot \grad \eta)
    +\frac {\abs\bc^2\sigma}{g} \grad \cdot \bn
    + \abs\bc^2 \eta\right) d\nx\\
    \notag
    &=  -\int_{B_r \cap \surf} \grad \cdot \left(    
    \frac {\abs\bc^2\sigma}{g}  \bn +
    \eta (\bc \cdot \bx) \bc \right)\, d\nx\\
    \label{eqn:divA1}
    &=  \int_{\dell B_r \cap \surf} \left(    
    \frac {\abs\bc^2\sigma}{g}  \bn +
    \eta (\bc \cdot \bx) \bc \right) \cdot \bnu\, ds,
  \end{align}
  where here the outward-pointing normal $\bnu \maps T \to \R^{\sdim-1}$ 
  and measure $ds$ are with respect to the projection of $\dell B_r
  \cap \surf$ onto $\R^{\sdim-1}$.

  Plugging \eqref{eqn:divA1} into \eqref{eqn:divA} we obtain
  \begin{align}
    \label{eqn:killrhs}
    \begin{aligned}
      \int_{\fluid \cap B_r} \abs{\grad \varphi}^2\, d \bx 
      &=
      \frac {\abs\bc^2\sigma}{g} \int_{\surf \cap \dell B_r} \bn \cdot \bnu\, ds
      -\int_{\surf \cap \dell B_r} \eta (\bc \cdot \bx) (\bc \cdot \bnu) \, ds\\
      &\qquad\qquad
      +\int_{\fluid \cap \dell B_r} \bA \cdot \bn \, \dS.
    \end{aligned}
  \end{align}
  From \eqref{eqn:ndS} and \eqref{eqn:epsdecay:eta} we see that the first
  integrand on the right hand side of \eqref{eqn:killrhs} is
  $O(\abs{\grad\eta}) = O(\abs\nx^{-(\sdim+\varepsilon)})$ while the
  second integrand is $O(\abs{\nx}^{-(\sdim+2+\varepsilon)})$. Thus these
  first two integrals vanish as $r \to \infty$.
  Thanks to the asymptotic conditions \eqref{eqn:dipole} proved in
  Theorem~\ref{thm:dipole}, the remaining integral converges, as $r \to
  \infty$, to the constant value
  \begin{align*}
    \int_{\dell B_r \cap \{\ny<0\}} 
    \left(      (\bc \cdot \bx) \grad \frac{\ba \cdot \bx}{\abs \bx^\sdim}
    - \frac{ \ba \cdot \bx }{\abs \bx^\sdim} \bc 
    \right) \cdot 
    \frac{\bx}{\abs \bx} \, \dS
    &= 
    -\sdim
    \int_{\dell B_1 \cap \{\ny<0\}} (\bc \cdot \bx)(\ba \cdot \bx) \, \dS\\
    &= - \frac{\pi^{\sdim/2}}{\Gamma(\frac \sdim 2)}  (\bc \cdot \ba),
  \end{align*}
  leaving us with \eqref{eqn:kinetic} as desired.
\end{proof}

\begin{proof}[Proof of Corollary~\ref{cor:mass}]
  We follow the proof of Theorem~\ref{thm:kinetic}, but with 
  $\bA$ replaced by the vector field
  \begin{align*}
    \bC &= -\frac {\abs\bc^2}g (\ey \cdot \grad \varphi )
    \grad \varphi 
    + \frac{\abs\bc^2}g \left( \frac 12 \abs{\grad \varphi}^2
    - \bc \cdot \grad \varphi \right) 
    \ey 
    + \frac{\abs\bc^2}g ( \ey \cdot \grad \varphi )
    \bc
  \end{align*}
  obtained by dropping all of the terms in $\bA$ without a factor of
  $\abs\bc^2/g$. A simple calculation shows that $\grad \cdot \bC =
  0$, and the boundary conditions \eqref{eqn:ww:kinematic} and
  \eqref{eqn:ww:dynamic} give
  \begin{align*}
    \bC \cdot \bn =  - \abs\bc^2 \eta - \frac{\abs\bc^2\sigma}g \grad
    \cdot \bn
  \end{align*}
  on the free surface $\surf$. As in the proof of
  Theorem~\ref{thm:kinetic}, we apply the divergence theorem, first to
  $\bC$ in $B_r \cap \fluid$, and then again on $\surf \cap B_r$,
  obtaining
  \begin{align}
    \label{eqn:firstterm}
    \abs\bc^2 \int_{ B_r \cap \surf} \eta\, d\nx
    &= 
    -\frac {\abs\bc^2\sigma}{g} \int_{\surf \cap \dell B_r} \bn \cdot \bnu\, ds
    +\int_{\fluid \cap \dell B_r} \bC \cdot \bn \, \dS.
  \end{align}
  The first term on the right hand side of \eqref{eqn:firstterm}
  vanishes as $r \to \infty$ as in proof of
  Theorem~\ref{thm:kinetic}. The second term vanishes since
  $\tilde\bA = O(\abs{\grad\varphi}) = O(\abs x^{-\sdim})$ by
  \eqref{eqn:dipole}. From \eqref{eqn:epsdecay:eta} we know that
  $\eta \in L^1(\R^{\sdim-1})$, so taking $r \to \infty$ in
  \eqref{eqn:firstterm} yields \eqref{eqn:mass} as desired.
\end{proof}

\begin{proof}[Proof of Corollary~\ref{cor:angular}]
  For any $r > 0$, the asymptotic condition \eqref{eqn:dipole}
  implies that the integral
  \begin{align*}
    \int_{\fluid \cap \dell B_r} \bx \by \grad \varphi \, d S
  \end{align*}
  converges, as $r \to \infty$, to the constant value
  \begin{align}
    \label{eqn:mustbe}
    \int_{\dell B_r \cap \{\ny < 0\}} 
    \bx \by \grad \frac{\ba \cdot \bx}{\abs \bx^\sdim} \, d S
    = - \ba \by \int_{\dell B_1 \cap \{ \ny < 0\}}
    \bx \, d S
    = \frac{\pi^{(\sdim-1)/2}}{\Gamma\left(\frac{\sdim+1}2\right)} \ba \by \ey.
  \end{align}
  Suppose that the angular momentum is finite. Then the right hand
  side of \eqref{eqn:mustbe} must be zero, which forces $\ba \by \ey =
  0$ and hence $\bax = 0$. But then $\bc \cdot
  \ba = 0$ so that \eqref{eqn:kinetic} gives $\grad\varphi \equiv 0$
  and therefore $\varphi \equiv 0$.

  It remains to show $\eta \equiv 0$. Plugging $\varphi \equiv 0$ 
  into \eqref{eqn:ww:dynamic}, we find $g\eta = -\sigma \grad \cdot
  \bn$. At a positive maximum of $\eta$, this reduces to $0 < g\eta =
  \sigma \Delta \eta \le 0$, a contradiction. Similarly at a negative
  minimum of $\eta$ we have $0 > g\eta = \sigma \Delta \eta \ge 0$,
  again a contradiction, and we conclude that $\eta \equiv 0$.
\end{proof}

\appendix
\section{}\label{sec:appendix}

In this appendix we provide the remaining details in the proof of
Lemma~\ref{lem:dipole}.

Setting $\ksurf = (B_\delta \cap T(\surf)) \cup \{0\}\sub \dell\kfluid$ for
$\delta$ sufficiently small, we first claim that $\ksurf$ is a $C^2$
graph $\kny = f(\knx)$, which will imply that $\ksurf$ is a $C^2$
boundary portion. As an intermediate step, we define yet another
variable
\begin{align*}
  \kax = \frac \nx{\abs\nx^2} 
\end{align*}
which, on $\ksurf \without \{0\}$, is related to $\knx$ via
\begin{align}
  \label{eqn:kaxtime}
  \knx 
  = \frac{\kax}{1 + \abs\kax^2\eta^2(\kax/\abs\kax^2)}.
\end{align}
Our decay assumptions \eqref{eqn:epsdecay:eta} easily imply that
$\abs\kax^2 \eta^2(\kax/\abs\kax^2)$ extends to a $C^2$ function of
$\kax$ in a neighborhood of $\kax = 0$ which vanishes at $\kax = 0$
together with its first and second derivatives. Thus we can use the
implicit function theorem to solve \eqref{eqn:kaxtime} for $\kax$ as a
$C^2$ function of $\knx$. Expressing $\kny$ in terms of $\kax$,
\begin{align}
  \label{eqn:knytime}
  \kny 
  = \frac{\abs\kax^2\eta(\kax/\abs\kax^2)}
  {1 + \abs\kax^2\eta^2(\kax/\abs\kax^2)},
\end{align}
\eqref{eqn:epsdecay:eta} similarly guarantees that $\kny$ can be
extended to a $C^2$ function of $\kax$ in a neighborhood of $\kax =
0$. Composing the $C^2$ mappings $\kax \mapsto \kny$ and $\knx \mapsto
\kax$ yields the desired equation $\kny = f(\knx)$ for $\ksurf$.

We now consider the boundary condition satisfied by $\kphi$ on
$\ksurf$. A calculation shows that a $C^1$ unit
normal $\kbn$ on $\ksurf$ is given in terms of the normal vector $\bn$ on
$\surf$ via the formula
\begin{align*}
  \kbn(\kbx)
  = \bn(\bx) - 2 \frac{\bn(\bx)\cdot\bx}{\abs\bx^2}.
\end{align*}
For simplicity assume that $\kbn$ points out of $\kfluid$;
otherwise the definitions of $\source$ and $\alpha$ below are off by
an unimportant sign. Differentiating the identity
\begin{align*}
  \varphi(\bx) = \frac 1{\abs\bx^{\sdim-2}} 
  \kphi\bigg( \frac \bx{\abs\bx^2} \bigg)
\end{align*}
and using the boundary condition \eqref{eqn:ww:kinematic}, we find
that, on $\ksurf \without \{0\}$,
\begin{align*}
  \bc \cdot \bn 
  &= \pp \varphi \bn
  = -(\sdim-2)\frac {\bx \cdot \bn}{\abs \bx^\sdim} \kphi
  + \frac 1{\abs \bx^\sdim} \pp \kphi\kbn.
\end{align*}
Multiplying through by $\abs\bx^\sdim$, we write this as
\begin{align*}
  \pp \kphi \kbn
  + \alpha \kphi
  = \source
  \qquad \ona \ksurf 
  \without \{0\},
\end{align*}
where $\alpha$ and $\source$ are defined on $\ksurf \without \{0\}$ by
\begin{align*}
  \alpha(\kbx) =  -(\sdim-2) (\bx \cdot \bn),
  \qquad 
  \source(\kbx) = \abs \bx^\sdim (\bc \cdot \bn).
\end{align*}
Clearly $\alpha, \source \in C^1(\ksurf \without \{0\})$. Using our
decay assumptions \eqref{eqn:epsdecay:eta}, we check that they extend
to $C^\varepsilon$ functions of $\knx$ vanishing at $\knx = 0$. We
remark that only this last extension of $\source$ requires the full
force of \eqref{eqn:epsdecay:eta}; the other extensions only require
$\eta = O(1/\abs\bx^\varepsilon)$, $D \eta =
O(1/\abs\bx^{1+\varepsilon})$, and $D^2\eta =
O(1/\abs\bx^{2+\varepsilon})$.

Next we show that $\kphi \in H^1(\kfluid)$. From
\eqref{eqn:epsdecay:eta} we know $\kphi \in C^0(\overline\kfluid) \cap
C^2(\overline\kfluid\without \{0\})$, so it is enough to show $\grad
\kphi \in L^2(\kfluid \cap B_R)$ for some $R>0$. Fix $R$ small enough
that $B_{2R} \cap \dell\kfluid \sub \ksurf$, let $\eta_0 \in
C^\infty_\mathrm{c}(\R)$ be a nonnegative function satisfying
$\eta_0(s) = 0$ for $s < 1$ and $\eta_0(s) = 1$ for $s > 2$, and for $r <
R/2$ define
\begin{align*}
  \eta(\kbx;r) 
  = 
  \eta_0(r\inv\abs\kbx) \Big(1-\eta_0\big(R\inv \abs\kbx\big)\Big).
\end{align*}
Multiplying $\Delta\kphi = 0$ by $\eta^2 \kphi$ and integrating by
parts, we find
\begin{align}
  \notag
  \int_{\kfluid} \abs{\eta \grad \kphi}^2 \, d\kbx
  &=  
  - 2\int_{\kfluid} 
  \eta \kphi\grad \eta \cdot \grad \kphi \, d\kbx
  + \int_{\ksurf} \eta \kphi ( \source - \alpha \kphi )\, \dS\\
  \label{eqn:gpl2}
  &\le C\left( 1 + 
  \n{\grad \eta}_{L^2(\R^\sdim)}
   \n{\eta \grad \kphi}_{L^2(\kfluid)}
  \right),
\end{align}
where $C$ depends on the $L^\infty$ norms of $\kphi,\alpha,\source$.
Since
\begin{align*}
  \n{\grad \eta}_{L^2(\R^\sdim)} \le C(1+r^{\sdim-2}) \le C,
\end{align*}
\eqref{eqn:gpl2} implies an upper bound on $\n{\eta \grad
\kphi}_{L^2(\kfluid)}$ independent of $r < R/2$. Sending $r \to 0$, we
obtain $\grad\kphi \in L^2(\kfluid \cap B_R)$ as desired.

Finally, we claim that $\kphi$ is a weak solution to
\eqref{eqn:weak}. Certainly (after perhaps changing the
definitions of $\alpha,\source$ by a sign)
\begin{align}
  \label{eqn:weakform}
  \int_{\kfluid} \grad \kphi \cdot \grad v\, d\kbx
  = \int_{\ksurf} (\alpha \kphi - \source)v\, d\kbx
\end{align}
for all smooth $v \in H^1(\kfluid)$ vanishing in a neighborhood of
$0$. Such $v$ are dense in $H^1(\kfluid)$ (see, for instance,
Lemmas~17.2 and 17.3 in \cite{tartar:book}). Since $\kphi \in
H^1(\kfluid)$, \eqref{eqn:weakform} therefore holds for all $v \in
H^1(\kfluid)$ and the claim is proved.

\medbreak
\textbf{Acknowledgments.} 
The author thanks Dennis Kriventsov for many helpful discussions on
the proof of Lemma~\ref{lem:dipole}.
This research was supported by the National Science Foundation under
Award No.~DMS-1400926.

\end{document}